\documentclass{amsart}

\usepackage{xcolor}

\usepackage{latexsym,amsmath,amssymb,epic}
\usepackage{latexsym,amssymb,epic}
\usepackage{amsmath,amsthm}
\usepackage{color}

\usepackage{soul}

\usepackage{hyperref}
\hypersetup{
	colorlinks=true, 
	linktoc=all, 
	linkcolor=blue} 

\newcommand{\seqnum}[1]{\href{https://oeis.org/#1}{\rm \underline{#1}}}

\newtheorem{theorem}{Theorem}[section]
\newtheorem{lemma}[theorem]{Lemma}
\newtheorem{corollary}[theorem]{Corollary}

\setlength{\arrayrulewidth}{0.5mm}
\setlength{\tabcolsep}{18pt}

\allowdisplaybreaks

\begin{document}
	
\title[Partitions with Odd Parts Repeated at Most Twice]{Elementary Proofs of Two Congruences for Partitions with Odd Parts Repeated at Most Twice}

\author{James A. Sellers}
\address{Department of Mathematics and Statistics, University of Minnesota Duluth, Duluth, MN 55812, USA}
\email{jsellers@d.umn.edu}

\subjclass[2010]{11P83, 05A17}
	
\keywords{partitions, congruences, generating functions, dissections}
	
\maketitle
\begin{abstract}

In a recent article on overpartitions, Merca considered the auxiliary function $a(n)$ which counts the number of partitions of $n$ where odd parts are repeated at most twice (and there are no restrictions on the even parts).  In the course of his work, Merca proved the following:  
For all $n\geq 0$, 
\begin{align*}
a(4n+2) &\equiv 0 \pmod{2}, \textrm{\ \ and} \\
a(4n+3) &\equiv 0 \pmod{2}.
\end{align*}
Merca then indicates that a classical proof of these congruences would be very interesting. The goal of this short note is to fulfill Merca's request by providing two truly elementary (classical) proofs of these congruences.
\end{abstract}

\section{Introduction}  

A {\it partition} of a positive integer $n$ is a finite non--increasing sequence of positive integers $\lambda_1 \geq \lambda_2 \geq \dots \geq \lambda_k$ such that $\lambda_1 + \lambda_2 + \dots + \lambda_k = n.$  We refer to the integers $\lambda_1, \lambda_2, \dots, \lambda_k$ as the {\it parts} of the partition.  For example, the number of partitions of the integer $n=4$ is 5, and the partitions counted in that instance are as follows:  
$$4, \ \ \ 3+1, \ \ \ 2+2, \ \ \ 2+1+1, \ \ \ 1+1+1+1$$
The number of partitions of $n$ is denoted $p(n)$, and the corresponding generating function is given by 
\begin{equation}
\label{genfnp}
\sum_{n\geq 0}p(n)q^n = \frac{1}{f_1} 
\end{equation}
where $f_r = (1-q^r)(1-q^{2r})(1-q^{3r})\dots$ for any positive integer $r.$   

An {\it overpartition} of a positive integer $n$ is a partition of $n$ wherein the first occurrence of a part may be overlined.  
For example, the number of overpartitions of $n=4$ is 14 given the following list of overpartitions of 4:  
$$
4, \ \ \ \overline{4}, \ \ \ 3+1, \ \ \ \overline{3}+1, \ \ \ 3+\overline{1}, \ \ \ \overline{3}+\overline{1}, 
$$
$$
2+2, \ \ \  \overline{2} + 2, \ \ \ 2+1+1, \ \ \  \overline{2}+1+1, \ \ \  2+\overline{1}+1, \ \ \  \overline{2}+\overline{1}+1, 
$$
$$
1+1+1+1, \ \ \ \overline{1}+1+1+1
$$
The number of overpartitions of $n$ is often denoted $\overline{p}(n),$ so from the example above we see that $\overline{p}(4) = 14.$  
As noted by Corteel and Lovejoy \cite{CL}, the generating function for $\overline{p}(n)$ is given by 
\begin{equation*}
\sum_{n\geq 0} \overline{p}(n)q^n = \frac{f_2}{f_1^{2}}.
\end{equation*}

In recent work on overpartitions \cite{MercaAM}, Merca considered the auxiliary function $a(n)$ which counts the number of partitions of weight $n$ wherein no part is congruent to 3 modulo 6.  From this definition, it is clear that the generating function for $a(n)$ is given by 
\begin{equation}
\label{genfn_a}
\sum_{n\geq 0} a(n)q^n 
=
\prod_{i\geq 1} \frac{(1-q^{6i-3})}{(1-q^i)} 
= 
\prod_{i\geq 1} \frac{(1-q^{6i-3})}{(1-q^i)} \cdot \frac{(1-q^{6i})}{(1-q^{6i})} 
= 
\frac{f_3}{f_1f_6}.
\end{equation}
The function $a(n)$ also counts the number of partitions of $n$ where odd parts are repeated at most twice (and there are no restrictions on the even parts).  
See \cite[\seqnum{A131945}]{OEIS} for more information.  

In \cite{MercaAM}, Merca proved the following two generating function identities:  

\begin{theorem} We have
\label{Merca_id1}
\begin{align*}
\sum_{n\geq 0} a(4n+2)q^n 
&= 2\frac{f_3f_4^3f_6^2f_{24}}{f_1^2f_2^3f_8f_{12}^2} + 4q\frac{f_6^5f_{24}^2}{f_1^3f_2^2f_{12}^3} - 2q^2\frac{f_3f_4^3f_{24}^4}{f_1^2f_2^2f_6f_8^2f_{12}^2} \\
&\ \ \ + 4q^3\frac{f_6^2f_{24}^5}{f_1^3f_2^5f_8f_{12}^3} -8q^5\frac{f_{24}^8}{f_1^3f_6f_8^2f_{12}^3}.
\end{align*}
\end{theorem}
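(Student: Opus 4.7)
The plan is to start from the generating function identity (\ref{genfn_a}), namely $\sum_{n \ge 0} a(n) q^n = f_3/(f_1 f_6)$, and extract the subseries $\sum_{n \ge 0} a(4n+2) q^n$ by two successive 2-dissections. The basic tool is the map $q \mapsto -q$: for any series $F(q) = \sum c_n q^n$, the even part is $\tfrac{1}{2}(F(q) + F(-q))$ and the odd part is $\tfrac{1}{2}(F(q) - F(-q))$.

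First, I would compute $\sum_{n \ge 0} a(2n) q^{2n} = \tfrac{1}{2}[F(q) + F(-q)]$ for $F(q) = f_3/(f_1 f_6)$ using the classical identities $f_1(-q) = f_2^3/(f_1 f_4)$ and $f_3(-q) = f_6^3/(f_3 f_{12})$ (with $f_6(-q) = f_6$ since $6$ is even). Putting the two resulting fractions over a common denominator and simplifying with standard eta-quotient relations will, after the expected cancellations, express the sum as a rational combination of eta-quotients that is manifestly a power series in $q^2$. Replacing $q^2$ by $q$ then yields a closed form for $\sum_{n \ge 0} a(2n) q^n$, involving $f_1, f_2, f_3, f_4, f_6, f_{12}$.

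Second, I would apply exactly the same procedure to the series $\sum a(2n) q^n$ obtained above. Writing this series as $G(q)$, the odd part $\tfrac{1}{2}[G(q) - G(-q)]$ picks out the coefficients of odd powers of $q$, which are precisely the values $a(4n+2)$. Substituting $q^2 \mapsto q$ (after dividing by the overall factor of $q$) in the resulting expression should deliver the right-hand side of Theorem~\ref{Merca_id1}. Note that the doubling of indices at each stage explains why the final answer involves eta-quotients out to $f_{24}$.

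The main obstacle is the algebraic simplification at each averaging step: the combination $F(q) \pm F(-q)$ for $F = f_3/(f_1 f_6)$ produces a sum of two eta-quotients whose consolidation into a single clean expression requires nontrivial numerator factorisation. After two such stages, the collected result contains many cross terms, and verifying that they reduce to exactly the five summands of Theorem~\ref{Merca_id1} --- with the asymmetric exponent pattern $q^0, q, q^2, q^3, q^5$ (no $q^4$ term) and coefficients $2, 4, -2, 4, -8$ --- is a delicate eta-quotient computation. No individual step is conceptually deep, but the bookkeeping must be executed with care, and the absence of a $q^4$ summand presumably reflects a specific cancellation between two candidate contributions in the final tally.
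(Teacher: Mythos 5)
There is a genuine gap, and it is structural rather than computational. Your plan---two successive 2-dissections of $f_3/(f_1f_6)$ via $q\mapsto -q$ (or, equivalently, via standard dissection lemmas such as those for $f_3/f_1$ and $1/f_1^2$)---is sound as far as it goes, but what it produces is a closed form for $\sum_{n\geq 0} a(4n+2)q^n$ as a short eta-quotient expression; indeed, carried out correctly it yields the single quotient $2\,f_2f_6^2f_8^2/(f_1^4f_3f_{12})$. It does not, and cannot by itself, produce the specific five-term right-hand side of Theorem~\ref{Merca_id1}. That right-hand side is not organized as a dissection by residue classes (each of the five summands is a full power series), so there is no mechanism in your procedure by which the ``cross terms'' would assemble into exactly those five eta-quotients with the powers $q^0,q,q^2,q^3,q^5$ and coefficients $2,4,-2,4,-8$; your own remark that the missing $q^4$ term ``presumably reflects a specific cancellation'' signals that this step is being hoped for rather than argued. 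To complete a proof of Theorem~\ref{Merca_id1} along your lines you would still need the separate, nontrivial identity equating Merca's five-term combination with the simple quotient your dissection delivers, and nothing in the proposal addresses that.

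For context: the paper does not prove Theorem~\ref{Merca_id1} elementarily either; it accepts Merca's proof, which rests on the algorithmic machinery of Radu and Smoot, and the whole point of the paper is to sidestep Theorem~\ref{Merca_id1} by proving the much simpler identity $\sum_{n\geq 0}a(4n+2)q^n = 2\,f_2f_6^2f_8^2/(f_1^4f_3f_{12})$ (Theorem~\ref{JAS_id1}), from which the congruence $a(4n+2)\equiv 0\pmod 2$ is immediate. Your method is essentially the paper's method for that simpler target---so if your goal is the congruence, redirect the write-up toward the one-term identity and you are done; but as a proof of the stated five-term identity it is incomplete.
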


\begin{theorem}
\label{Merca_id2}
We have 
\begin{align*}
\sum_{n\geq 0} a(4n+3)q^n 
&= 2\frac{f_4^8f_6^6f_{24}^3}{f_1f_2^7f_8^3f_{12}^7}+8q\frac{f_4^5f_6^9f_{24}^4}{f_1^2f_2^6f_3f_8^2f_{12}^8}-2q^2\frac{f_4^8f_6^3f_{24}^6}{f_1f_2^6f_8^4f_{12}^7} \\
&\ \ \ -16q^3\frac{f_4^5f_6^6f_{24}^7}{f_1^2f_2^5f_3f_8^3f_{12}^8} +8q^5\frac{f_4^5f_6^3f_{24}^{10}}{f_1^2f_2^4f_3f_8^4f_{12}^8}.
\end{align*}
\end{theorem}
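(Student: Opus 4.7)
The plan is to apply a 2-dissection procedure twice to the generating function
$$\sum_{n \geq 0} a(n)\,q^n \;=\; \frac{f_3}{f_1 f_6}$$
from (\ref{genfn_a}) in order to isolate the coefficients $a(n)$ with $n \equiv 3 \pmod 4$. As a first step I would invoke a known 2-dissection of the ratio $f_3/f_1$ (as recorded, for instance, in Hirschhorn's \emph{The Power of $q$}), writing
$$\frac{f_3}{f_1} \;=\; X(q^2) + q\,Y(q^2)$$
for appropriate eta-quotients $X$ and $Y$. Since $1/f_6$ is a power series in $q^6$ and hence a function of $q^2$, dividing through by $f_6$ immediately produces a clean 2-dissection of the full generating function. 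Extracting the odd-indexed part and replacing $q^2$ by $q$ yields
$$\sum_{n \geq 0} a(2n+1)\,q^n \;=\; \frac{Y(q)}{f_3}.$$

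Next, to extract $a(4n+3)$, I would 2-dissect $Y(q)/f_3$. This is the delicate step, because $1/f_3 = \sum_{k \geq 0} p(k) q^{3k}$ is supported on exponents divisible by $3$ whose parities alternate with $k$, so $1/f_3$ is not itself a function of $q^2$. My plan is to derive a 2-dissection of $1/f_3$ by combining standard 2-dissections of $1/f_1$ and of $f_1/f_3$ (or, equivalently, by inserting a judicious auxiliary factor such as $f_{12}/f_{12}$ that allows the denominator to be reorganized into a power series in $q^2$). Combining this with the 2-dissection of $Y(q)$, collecting like terms, and extracting the odd part of the result will yield the desired formula for $\sum_{n \geq 0} a(4n+3)\,q^n$ as a finite sum of eta-quotients.

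The final task is to recognize the resulting sum as the precise five-term expression of Theorem~\ref{Merca_id2}. The presence of the $q^5$ coefficient in the stated identity is a useful clue: it indicates that the second 2-dissection generates cross-products landing at exponents beyond $q^0,\dots,q^3$ before normalization, and these must be tracked carefully before consolidation. I expect the primary obstacle to be exactly this simplification phase. Although each individual step relies only on elementary identities among $f_1, f_2, f_3, f_4, f_6, f_8, f_{12}, f_{24}$, the sheer volume of algebraic bookkeeping needed to reconcile my raw output with Merca's precise form is substantial and must be organized carefully in order to avoid transcription errors and to keep the two-term factor $2$ (and the occasional $8$ or $16$) visibly present as a common multiple throughout.
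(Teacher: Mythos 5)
Your overall dissection strategy is sound as a way to compute $\sum_{n\geq 0} a(4n+3)q^n$, but as written it neither proves the stated theorem nor correctly locates where the work lies. A preliminary point: once you insert the explicit 2-dissection of $f_3/f_1$ (Lemma \ref{lemma_f3overf1}), the odd part is $Y(q^2)=\frac{f_6f_8^2f_{48}}{f_2^2f_{16}f_{24}}$, so after dividing by $f_6$ and replacing $q^2$ by $q$ one gets $\sum_{n\geq 0} a(2n+1)q^n=\frac{f_4^2f_{24}}{f_1^2f_8f_{12}}$: the $f_3$ appearing in $Y(q)$ cancels the $1/f_3$, and the only factor that is not a series in $q^2$ is $1/f_1^2$. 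Hence the ``delicate step'' you flag --- producing a 2-dissection of $1/f_3$ by combining dissections of $1/f_1$ and $f_1/f_3$ --- is a non-issue; the classical dissection of $1/f_1^2$ (Lemma \ref{lemma_1overf1squared}) finishes the extraction. That you plan around an obstacle which disappears upon writing $Y$ explicitly indicates the computation was not carried far enough.

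More seriously, carried out with these standard lemmas your procedure lands on the single-term identity $\sum_{n\geq 0} a(4n+3)q^n = 2\frac{f_2^4f_8^2f_{12}}{f_1^5f_4^2f_6}$ (this is exactly Theorem \ref{JAS_id2} of the paper), not on Merca's five-term expression. To prove Theorem \ref{Merca_id2} itself you would still need to show that the five eta-quotients on Merca's right-hand side sum to this single quotient, and that is a genuine eta-quotient identity in its own right, not mere ``algebraic bookkeeping'': your proposal supplies no mechanism (further dissections, theta-function identities, or a modular-forms/Sturm-bound verification) for establishing it, and there is no reason the raw output of your dissections would organize itself term-by-term into Merca's form. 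Note that this gap is precisely why the paper does not reprove Theorem \ref{Merca_id2} elementarily: it accepts Merca's automated proof via Radu's algorithm and Smoot's implementation, and instead derives the simpler identity above, which is all that is needed for the congruence. So your plan, if completed, would reprove Theorem \ref{JAS_id2}; the bridge from there to the stated five-term identity is the missing, and essential, step.
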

In \cite{MercaAM}, Merca relied completely on the work of Radu \cite{SR} and Smoot \cite{NS} to provide ``automated'' proofs of Theorem \ref{Merca_id1} and Theorem \ref{Merca_id2}.  While these proofs are indeed correct, they do not provide any insights into the results themselves.  (In particular, the technique provides no guidance as to whether simpler generating function identities exist.)  In this case, the ``power'' of these techniques is, in essence, unnecessary.  

As corollaries of Theorem 1 and Theorem 2, Merca then noted the following:  

\begin{corollary}
\label{Merca_congs}
For all $n\geq 0$, 
\begin{align*}
a(4n+2) &\equiv 0 \pmod{2}, \textrm{\ \ and} \\
a(4n+3) &\equiv 0 \pmod{2}.
\end{align*}
\end{corollary}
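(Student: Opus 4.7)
The plan is to work throughout modulo $2$ and reduce $\sum_{n\geq 0} a(n)q^n = f_3/(f_1 f_6)$ to an expression whose exponents visibly lie only in the residue classes $0$ and $1$ modulo $4$. Since $f_k^2\equiv f_{2k}\pmod 2$ (the ``freshman's dream''), multiplying the numerator and denominator by $f_1$ gives
\begin{equation*}
\sum_{n\geq 0} a(n)\,q^n \equiv \frac{f_1 f_3}{f_2 f_6} \pmod 2.
\end{equation*}

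Next I would factor
\begin{equation*}
\frac{f_1 f_3}{f_2 f_6} = \frac{f_1 f_6}{f_2 f_3}\cdot\frac{f_3^2}{f_6^2},
\end{equation*}
and identify $f_1 f_6/(f_2 f_3) = (q;q^6)_\infty (q^5;q^6)_\infty$, since both quotients remove the parts $\equiv 3 \pmod 6$ from $(q;q^2)_\infty$. The Jacobi triple product then gives
\begin{equation*}
(q;q^6)_\infty (q^5;q^6)_\infty \, f_6 = \sum_{n\in\mathbb{Z}} (-1)^n q^{3n^2-2n}.
\end{equation*}
Applying $f_3^2\equiv f_6$ and $f_6^2\equiv f_{12}\pmod 2$ to simplify the remaining factor $f_3^2/f_6^2$ collapses everything to
\begin{equation*}
\sum_{n\geq 0} a(n)\,q^n \equiv \frac{1}{f_{12}}\sum_{n\in\mathbb{Z}} q^{3n^2-2n} \pmod 2.
\end{equation*}

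To conclude, every exponent of $1/f_{12}$ is a multiple of $12$, hence $\equiv 0\pmod 4$, while a direct check of $3n^2-2n=n(3n-2)$ shows it is $\equiv 0\pmod 4$ when $n$ is even and $\equiv 1\pmod 4$ when $n$ is odd. Thus every exponent appearing on the right side is $\equiv 0$ or $1\pmod 4$, and the congruences $a(4n+2)\equiv a(4n+3)\equiv 0\pmod 2$ follow at once.

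The main obstacle is identifying the right Jacobi-style factorization: several triple products can be made to fit after the mod $2$ reductions, but one needs the quadratic exponent to land in only two residue classes modulo $4$. The product attached to $(q;q^6)_\infty (q^5;q^6)_\infty$, with exponent $3n^2-2n$, is the one that splits cleanly into $\{0,1\}\pmod 4$; once it is selected the rest is purely mechanical.
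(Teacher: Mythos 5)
Your argument is correct and is essentially the paper's theta-function proof (Section \ref{sec:thetafn}): both reduce $\sum_{n\geq 0} a(n)q^n$ modulo $2$ to $\frac{1}{f_{12}}\sum_{k\in\mathbb{Z}}q^{3k^2\pm 2k}$ and then note that the exponents $12k$ and $3n^2-2n$ land only in the residue classes $0$ and $1$ modulo $4$. The only difference is cosmetic: you derive the theta identity yourself via the Jacobi triple product applied to $(q;q^6)_\infty(q^5;q^6)_\infty f_6$ together with $f_1^2\equiv f_2$, $f_3^2\equiv f_6$, $f_6^2\equiv f_{12} \pmod 2$, whereas the paper quotes the eta-quotient identity $\frac{f_2^2f_3f_{12}}{f_1f_4f_6}=\sum_k q^{3k^2+2k}$ and uses $f_2^2\equiv f_4\pmod 2$, and your mod-$4$ check of $n(3n-2)$ is equivalent to the paper's observation that $3N+1=(3k+1)^2$ cannot be $\equiv 2$ or $3\pmod 4$.
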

At the end of \cite[Section 4]{MercaAM}, Merca indicates that a classical proof of these congruences would be very interesting. The goal of this short note is to fulfill Merca's request by providing two truly elementary (classical) proofs of Corollary \ref{Merca_congs}.  

In Section \ref{sec:ids}, we give brief elementary proofs of the following generating function results (which are much simpler than those in Theorem \ref{Merca_id1} and Theorem \ref{Merca_id2}): 

\begin{theorem}
\label{JAS_id1}
We have 
$$
\sum_{n\geq 0} a(4n+2)q^n = 2\frac{f_2f_6^2f_8^2}{f_1^4f_3f_{12}}.
$$
\end{theorem}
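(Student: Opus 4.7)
The strategy is to perform a 4-dissection of $A(q) := \sum_{n \geq 0} a(n)\, q^n = f_3/(f_1 f_6)$ and extract the part consisting of powers of $q$ congruent to $2 \pmod 4$.

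The first step is to rewrite $A(q)$ in a form well-suited to dissection. Multiplying by $f_2$ and invoking Euler's identity $f_2/f_1 = (-q;q)_\infty$ together with $f_3/f_6 = (q^3;q^6)_\infty$, I would split the product $(-q;q)_\infty$ into six factors according to the residue of the exponent modulo $6$. The factor $(-q^3;q^6)_\infty$ combines with $(q^3;q^6)_\infty$ via $(1+a)(1-a) = 1-a^2$ to give $(q^6;q^{12})_\infty = f_6/f_{12}$; the three factors whose residues are even collapse to $(-q^2;q^2)_\infty = f_4/f_2$; and Jacobi's triple product identity turns $(-q;q^6)_\infty(-q^5;q^6)_\infty$ into $f(q,q^5)/f_6$, where $f(a,b) = \sum_{n \in \mathbb{Z}} a^{n(n+1)/2} b^{n(n-1)/2}$ is Ramanujan's theta function. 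Collecting these pieces produces
\[
A(q) = \frac{f_4\, f(q,q^5)}{f_2^2 \, f_{12}}.
\]

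Next I would dissect both factors. Splitting $f(q,q^5) = \sum_n q^{3n^2-2n}$ by the parity of $n$ yields $f(q,q^5) = f(q^8,q^{16}) + q\, f(q^4,q^{20})$, each summand being a function of $q^4$. Separately, the classical 2-dissection
\[
\frac{1}{f_1^2} = \frac{f_8^5}{f_2^5 f_{16}^2} + 2q\, \frac{f_4^2 f_{16}^2}{f_2^5 f_8}
\]
(a consequence of $\phi(q) = \phi(q^4) + 2q\psi(q^8)$ together with $\phi(q)\phi(-q) = \phi(-q^2)^2$), after the substitution $q \to q^2$, gives
\[
\frac{f_4}{f_2^2} = \frac{f_{16}^5}{f_4^4 f_{32}^2} + 2q^2\, \frac{f_8^2 f_{32}^2}{f_4^4 f_{16}},
\]
so the factor $f_4/(f_2^2 f_{12})$ likewise decomposes into a function of $q^4$ plus $q^2$ times another function of $q^4$.

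Multiplying the two dissections expresses $A(q)$ as a sum of four pieces lying in distinct residue classes modulo $4$. The unique piece with exponents $\equiv 2 \pmod 4$ comes from pairing the $q^2$-part of $f_4/(f_2^2 f_{12})$ with $f(q^8,q^{16})$, producing
\[
\sum_{n \geq 0} a(4n+2)\, q^{4n+2} = 2q^2\, \frac{f_8^2 f_{32}^2}{f_4^4 f_{12} f_{16}} \cdot f(q^8,q^{16}).
\]
A final application of Jacobi's triple product gives $f(q^8,q^{16}) = f_{16} f_{24}^2/(f_8 f_{48})$; substituting, cancelling $q^2$, and replacing $q^4$ by $q$ (under which $f_{4k} \mapsto f_k$) produces exactly Theorem~\ref{JAS_id1}. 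The main obstacle will be the preliminary reduction to the theta-function form $A(q) = f_4 f(q,q^5)/(f_2^2 f_{12})$, which demands careful bookkeeping of infinite-product factors; once it is in hand, both 2-dissections and all remaining simplifications follow routinely from standard eta-quotient identities.
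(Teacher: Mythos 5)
Your proof is correct, but it follows a genuinely different route from the paper's. The paper iterates two 2--dissections: it first applies the da Silva--Sellers dissection of $f_3/f_1$ (Lemma \ref{lemma_f3overf1}) to obtain $\sum_{n\geq 0}a(2n)q^n = \frac{f_2f_8f_{12}^2}{f_1^2f_4f_6f_{24}}$, and then applies Lemma \ref{lemma_1overf1squared} to that series and reads off the odd-indexed part. You instead begin by establishing $\sum_{n\geq 0}a(n)q^n=\frac{f_4\,f(q,q^5)}{f_2^2f_{12}}$ via the mod-6 splitting of $(-q;q)_\infty$ and the triple product; this amounts to identifying $f(q,q^5)=\sum_k q^{3k^2-2k}$ with the eta quotient $\frac{f_2^2f_3f_{12}}{f_1f_4f_6}$, which is exactly the theta function $G(q)$ that the paper exploits only modulo 2 in its second proof of Corollary \ref{Merca_congs}. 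You then reach the class $2\pmod 4$ in a single pass, splitting $f(q,q^5)$ by the parity of the summation index (both pieces being series in $q^4$) and pairing this with the $q\mapsto q^2$ version of Lemma \ref{lemma_1overf1squared}; the bookkeeping checks out, e.g.\ $(-q;q^6)_\infty(-q^5;q^6)_\infty=f(q,q^5)/f_6$, $f(q^8,q^{16})=f_{16}f_{24}^2/(f_8f_{48})$, and the resulting piece $2q^2\frac{f_8f_{24}^2f_{32}^2}{f_4^4f_{12}f_{48}}$ does become $2\frac{f_2f_6^2f_8^2}{f_1^4f_3f_{12}}$ under $q^4\mapsto q$. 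As for what each approach buys: the paper's route is shorter given its quoted lemmas and delivers the $4n+3$ series from the companion odd-index dissection with equal ease; your route is more self-contained (Euler's identity and the Jacobi triple product suffice, with only the $1/f_1^2$ dissection imported, and you sketch its proof too), it makes explicit the theta-function structure underlying the paper's arithmetic proof of the congruences, and the same single computation yields exact generating functions in all four residue classes modulo 4 --- in particular, its $3\pmod 4$ component, $2q^3\frac{f_8^2f_{32}^2}{f_4^4f_{12}f_{16}}f(q^4,q^{20})$, simplifies to Theorem \ref{JAS_id2} as well.
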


\begin{theorem}
\label{JAS_id2}
We have 
$$
\sum_{n\geq 0} a(4n+3)q^n = 2\frac{f_2^4f_8^2f_{12}}{f_1^5f_4^2f_6}.
$$

\end{theorem}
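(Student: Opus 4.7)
The plan is to compute a 4-dissection of
$$\sum_{n\ge 0} a(n)\,q^n = \frac{f_3}{f_1 f_6}$$
and isolate the subseries indexed by $n\equiv 3\pmod 4$, realizing this 4-dissection as a pair of nested 2-dissections.

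For the first 2-dissection, I would rewrite $f_3/(f_1 f_6) = f_1 f_3/(f_1^2 f_6)$ to expose $f_1^2$ in the denominator, and then apply Ramanujan's well-known identity
$$\frac{1}{f_1^2} = \frac{f_8^5}{f_2^5 f_{16}^2} + 2q\,\frac{f_4^2 f_{16}^2}{f_2^5 f_8},$$
combined with a standard 2-dissection of the product $f_1 f_3$ (available from classical theta relations). Collecting terms according to the parity of the exponent and substituting $q^2 \mapsto q$ produces a compact eta-product expression for $\sum_{n\ge 0} a(2n+1)\,q^n$.

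For the second 2-dissection, I would apply the same technique to this odd-index generating function. Any surviving $1/f_1$ is recast as $f_1/f_1^2$, the same $1/f_1^2$ identity is used again, and the remaining odd-index factor in the numerator is handled by another theta 2-dissection. Extracting the odd powers of $q$ once more and substituting $q^2 \mapsto q$ yields $\sum_{n\ge 0} a(4n+3)\,q^n$ as a short sum of eta-products. Simplification via elementary eta-product identities (which eliminate the high-index factors $f_{16}$, $f_{24}$, $f_{48}$ introduced along the way in favor of $f_1, f_2, f_4, f_6, f_8, f_{12}$) should collapse this sum to the asserted right-hand side $2\,f_2^4 f_8^2 f_{12}/(f_1^5 f_4^2 f_6)$. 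The leading factor of $2$ emerges from the $2q$ coefficient in the $1/f_1^2$ dissection.

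The main obstacle I anticipate is the bookkeeping: two stacked 2-dissections produce several summands whose individual eta-product representations are considerably more complicated than the final expression, and the reduction to the closed form requires a careful sequence of cancellations and rewrites. This is analogous to the computation that yields the $a(4n+2)$ identity in Theorem~\ref{JAS_id1}; the present case is more delicate because one must extract the odd-index part of an already-odd-index subseries, doubling the depth of the dissection and thereby the length of the algebraic reduction.
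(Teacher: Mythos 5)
Your overall skeleton---two nested 2-dissections of $f_3/(f_1f_6)$, each time extracting the odd-indexed part and replacing $q^2$ by $q$, with the factor of $2$ traced to Ramanujan's dissection of $1/f_1^2$---is the same as the paper's. But there is a genuine gap at the first stage, and it is exactly the step you set aside as bookkeeping. The paper does not dissect $f_3/f_1$ by writing it as $(f_1f_3)\cdot(1/f_1^2)$; it invokes the known 2-dissection of the quotient itself (Lemma~\ref{lemma_f3overf1}, from da Silva and Sellers),
$\frac{f_3}{f_1} = \frac{f_4f_6f_{16}f_{24}^2}{f_2^2f_8f_{12}f_{48}} + q\,\frac{f_6f_8^2f_{48}}{f_2^2f_{16}f_{24}}$,
so that each parity class is a \emph{single} eta quotient and one reads off $\sum_{n\geq 0} a(2n+1)q^n = \frac{f_4^2f_{24}}{f_1^2f_8f_{12}}$ at once. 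In your route, if $f_1f_3 = A+qB$ and $1/f_1^2 = C+qD$ (with $A,B,C,D$ series in $q^2$), the odd part of the product is $AD+BC$: a sum of two eta quotients, only one of which carries the explicit factor $2$. Proving that this sum (and the further sums it spawns at the second stage) collapses to a single eta quotient is not a routine cancellation of indices $16$, $24$, $48$; it is essentially equivalent to proving the $f_3/f_1$ dissection you bypassed. Until that collapse is actually established, the claim that the final answer is $2\,f_2^4f_8^2f_{12}/(f_1^5f_4^2f_6)$---and in particular that every coefficient picks up the factor $2$---is asserted rather than proved.

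The fix is simple and also removes the complications you anticipate at the second stage: start from the 2-dissection of $f_3/f_1$ (cite it or prove it once), divide by $f_6$, and extract the odd part to get $\sum_{n\geq 0} a(2n+1)q^n = \frac{f_4^2f_{24}}{f_1^2f_8f_{12}}$. This series has no surviving odd-index factor other than $f_1^2$ in the denominator, so no additional theta dissection is needed: a single application of Lemma~\ref{lemma_1overf1squared} gives
$\frac{f_4^2f_{24}}{f_8f_{12}}\bigl( \frac{f_8^5}{f_2^5f_{16}^2} + 2q\frac{f_4^2f_{16}^2}{f_2^5f_8} \bigr)$,
whose odd part is $2q\,\frac{f_4^4f_{16}^2f_{24}}{f_2^5f_8^2f_{12}}$, and the substitution $q^2\mapsto q$ yields the stated identity directly.
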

\noindent 
Clearly, Corollary \ref{Merca_congs} follows immediately from Theorem \ref{JAS_id1} and Theorem \ref{JAS_id2}.  

Then, in Section \ref{sec:thetafn}, we return to the congruences stated in Corollary \ref{Merca_congs} and recall that there are no squares in the arithmetic progressions $4n+2$ and $4n+3$.  With this in mind, we link the generating function for $a(n)$ with a particular theta function which then immediately gives a new proof of these divisibilities (and sheds some light as to why these particular arithmetic progressions arise).  

\section{Proofs via Simplified Generating Function Identities}
\label{sec:ids}

In order to prove Theorem \ref{JAS_id1} and Theorem \ref{JAS_id2}, we require two elementary dissection lemmas. 

\begin{lemma}
\label{lemma_f3overf1}
We have 
$$
\frac{f_3}{f_1} = \frac{f_4f_6f_{16}f_{24}^2}{f_2^2f_8f_{12}f_{48}} + q\frac{f_6f_8^2f_{48}}{f_2^2f_{16}f_{24}}.
$$
\end{lemma}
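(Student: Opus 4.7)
The plan is to establish this 2-dissection by reducing it to a classical 2-dissection of Ramanujan's theta function $\psi(q) = f_2^2/f_1$. The key observation is that, using $\psi(q^3) = f_6^2/f_3$, we can rewrite
$$\frac{f_3}{f_1} = \frac{f_6^2}{f_2^2} \cdot \frac{\psi(q)}{\psi(q^3)}.$$
Thus the claimed identity is equivalent to a 2-dissection of $\psi(q)/\psi(q^3)$, and (since $\psi(q^3)$ is a series in $q^3$ and its product form $f_6^2/f_3$ contributes only factors with even subscripts when multiplied by the appropriate even-indexed products) it is essentially a 2-dissection of $\psi(q)$ itself, weighted by fixed even-$q$ factors.

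Next, I would invoke the classical 2-dissection
$$\psi(q) = f(q^6, q^{10}) + q\, f(q^2, q^{14}),$$
which is obtained by splitting $\sum_{n \geq 0} q^{n(n+1)/2}$ according to the residue of $n$ modulo $4$ and using the symmetry $n \mapsto -n-1$ to recombine the resulting four unilateral sub-sums into two bilateral theta series. Via Jacobi's triple product identity,
$$f(a,b) = (-a;ab)_\infty (-b;ab)_\infty (ab;ab)_\infty,$$
each of $f(q^6, q^{10})$ and $f(q^2, q^{14})$ converts to a product of $f_k$'s with $k$ dividing $48$ (this is the source of the $f_{48}$ in the stated right-hand side). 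Combining this product-form dissection of $\psi(q)$ with the factor $f_6^2/(f_2^2 \psi(q^3))$ and simplifying should yield exactly the right-hand side of the lemma.

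The main obstacle I expect is the algebraic bookkeeping in the final step. Converting $f(q^6, q^{10})$ and $f(q^2, q^{14})$ into $f_k$ products requires repeated applications of standard identities such as $(-q^k; q^{2k})_\infty = f_{2k}^2/(f_k f_{4k})$, and matching the output to the specific form $\frac{f_4 f_6 f_{16} f_{24}^2}{f_2^2 f_8 f_{12} f_{48}} + q \frac{f_6 f_8^2 f_{48}}{f_2^2 f_{16} f_{24}}$ requires non-trivial cancellation across many factors. An alternative (likely shorter) route is to quote a 2-dissection directly from Hirschhorn's monograph \emph{The Power of q}, where many such identities are catalogued, and verify that it coincides with the stated form; this would make the proof a one-line reduction.
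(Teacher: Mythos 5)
Your main derivation has a genuine gap at the reduction step. The rewriting $\frac{f_3}{f_1} = \frac{f_6^2}{f_2^2}\cdot\frac{\psi(q)}{\psi(q^3)}$ is correct, but the parenthetical claim that the 2--dissection of $\psi(q)/\psi(q^3)$ is ``essentially a 2--dissection of $\psi(q)$ itself, weighted by fixed even--$q$ factors'' is false: the factor $1/\psi(q^3) = f_3/f_6^2$ is a series in $q^3$, not in $q^2$, so it contains both even and odd powers of $q$, and dividing by it does not commute with extracting even and odd parts. Concretely, if $\psi(q) = A(q^2) + qB(q^2)$, the even part of $\psi(q)/\psi(q^3)$ is \emph{not} $A(q^2)/\psi(q^3)$, so knowing the dissection $\psi(q) = f(q^6,q^{10}) + q\,f(q^2,q^{14})$ does not by itself dissect the quotient. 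A second problem is the promised conversion of the two halves into products of $f_k$ with $k \mid 48$: by the triple product, $f(q^6,q^{10}) = (-q^6;q^{16})_\infty(-q^{10};q^{16})_\infty(q^{16};q^{16})_\infty$, and neither this nor $f(q^2,q^{14})$ is an eta--quotient, so the bookkeeping you anticipate cannot terminate in the stated form piece by piece. To salvage the idea one would have to do something more, e.g.\ rationalize using $\psi(q^3)\psi(-q^3) = f_6 f_{12}$ to get $\frac{f_3}{f_1} = \frac{f_6}{f_2^2 f_{12}}\,\psi(q)\,\psi(-q^3)$ and then 2--dissect the \emph{product} $\psi(q)\psi(-q^3)$ via a theta addition formula --- a genuinely different and longer computation than dissecting $\psi(q)$ alone.

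Your fallback suggestion --- simply quoting the dissection from the literature --- is in fact exactly what the paper does: its entire proof of this lemma is a citation to da Silva and Sellers \cite[Lemma 1, (14)]{dSS} (such dissections are also catalogued in Hirschhorn's monograph). So the citation route is sound and coincides with the paper's approach, but the derivation you sketch, as written, does not go through.
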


\begin{proof}
See da Silva and Sellers \cite[Lemma 1, (14)]{dSS}.  
\end{proof}

\begin{lemma}
\label{lemma_1overf1squared}
We have 
$$
\frac{1}{f_1^2} = \frac{f_8^5}{f_2^5f_{16}^2} + 2q\frac{f_4^2f_{16}^2}{f_2^5f_8}.
$$
\end{lemma}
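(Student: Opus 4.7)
The plan is to derive this 2-dissection of $1/f_1^2$ from the classical 2-dissection of Ramanujan's theta function
$$\phi(q) = \sum_{n=-\infty}^{\infty} q^{n^2},$$
namely the well-known splitting by parity of $n$:
$$\phi(q) = \phi(q^4) + 2q\,\psi(q^8),$$
where $\psi(q) = \sum_{n\geq 0} q^{n(n+1)/2}$. This identity is immediate from $n^2 \equiv 0 \pmod 4$ when $n$ is even and $n^2 = 4\binom{m+1}{2}+1$ when $n=2m+1$.

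Next I would invoke the standard product representations
$$\phi(q) = \frac{f_2^5}{f_1^2 f_4^2}, \qquad \psi(q) = \frac{f_2^2}{f_1},$$
both of which follow from the Jacobi triple product and appear, for example, in Hirschhorn's \emph{The Power of $q$}. Substituting $q\mapsto q^4$ and $q\mapsto q^8$ into the second and first of these respectively yields
$$\phi(q^4) = \frac{f_8^5}{f_4^2 f_{16}^2}, \qquad \psi(q^8) = \frac{f_{16}^2}{f_8}.$$

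The last step is purely algebraic: substituting these four product forms into $\phi(q) = \phi(q^4) + 2q\psi(q^8)$ gives
$$\frac{f_2^5}{f_1^2 f_4^2} = \frac{f_8^5}{f_4^2 f_{16}^2} + 2q\,\frac{f_{16}^2}{f_8},$$
and multiplying both sides by $f_4^2/f_2^5$ isolates $1/f_1^2$ and produces exactly
$$\frac{1}{f_1^2} = \frac{f_8^5}{f_2^5 f_{16}^2} + 2q\,\frac{f_4^2 f_{16}^2}{f_2^5 f_8},$$
as required.

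The only potential obstacle is marshalling the correct product expressions for $\phi$ and $\psi$, but these are completely standard and can equally be cited from a reference (indeed, this identity itself appears as a well-known 2-dissection and could simply be cited, in the style of the preceding Lemma~\ref{lemma_f3overf1}). Once the dissection of $\phi(q)$ is in hand, the rest is a one-line calculation with no analytic or combinatorial subtlety.
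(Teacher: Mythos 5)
Your derivation is correct: the parity split of $\phi(q)=\sum_{n\in\mathbb{Z}}q^{n^2}$ does give $\phi(q)=\phi(q^4)+2q\psi(q^8)$, the product forms $\phi(q)=f_2^5/(f_1^2f_4^2)$ and $\psi(q)=f_2^2/f_1$ are the standard Jacobi triple product evaluations, and the final algebraic step (multiplying by $f_4^2/f_2^5$) checks out exactly. The paper itself does not prove the lemma at all; it simply cites da Silva and Sellers, so your argument is not so much a different route as the self-contained proof that the citation defers to --- this 2-dissection of $1/f_1^2$ is classically obtained precisely by dividing the dissection of $\phi(q)$ by $f_2^5/f_4^2$, as you do. What your write-up buys is that the note becomes fully self-contained modulo the triple-product evaluations of $\phi$ and $\psi$, at the cost of importing those two standard facts (or their proofs); the paper's choice to cite keeps the exposition minimal, in the same spirit as its treatment of Lemma~\ref{lemma_f3overf1}. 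Either way, no gap.
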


\begin{proof}
See da Silva and Sellers \cite[Lemma 1, (11)]{dSS}.  
\end{proof} 

These are all the tools needed to complete our proofs in this section.  

\begin{proof}(of Theorem \ref{JAS_id1} and Theorem \ref{JAS_id2}) 
We begin by finding the 2--dissection of the generating function for $a(n)$, using (\ref{genfn_a}) as our starting point.  
\begin{align*}
\sum_{n\geq 0} a(n)q^n 
&= 
\frac{f_3}{f_1f_6} \\
&= 
\frac{1}{f_6}\left( \frac{f_3}{f_1} \right) \\
&= 
\frac{1}{f_6}\left( \frac{f_4f_6f_{16}f_{24}^2}{f_2^2f_8f_{12}f_{48}} + q\frac{f_6f_8^2f_{48}}{f_2^2f_{16}f_{24}} \right) \textrm{\ \ \ (Lemma \ref{lemma_f3overf1}) } \\
&= \frac{f_4f_{16}f_{24}^2}{f_2^2f_8f_{12}f_{48}} + q\frac{f_8^2f_{48}}{f_2^2f_{16}f_{24}}.
\end{align*}
Thus, we can immediately read off the following:  
\begin{align*}
\sum_{n\geq 0} a(2n)q^n &= \frac{f_2f_{8}f_{12}^2}{f_1^2f_4f_{6}f_{24}},  \\
\sum_{n\geq 0} a(2n+1)q^n &= \frac{f_4^2f_{24}}{f_1^2f_{8}f_{12}}.
\end{align*}
We now perform two additional 2--dissections.  First, 
\begin{align*}
\sum_{n\geq 0} a(2n)q^n 
&= 
\frac{f_2f_{8}f_{12}^2}{f_4f_{6}f_{24}}\left( \frac{1}{f_1^2} \right)\\
&= 
\frac{f_2f_{8}f_{12}^2}{f_4f_{6}f_{24}}\left( \frac{f_8^5}{f_2^5f_{16}^2} + 2q\frac{f_4^2f_{16}^2}{f_2^5f_8}  \right)  \textrm{\ \ \ (Lemma \ref{lemma_1overf1squared}) } \\
&= 
\frac{f_8^6f_{12}^2}{f_2^4f_4f_6f_{16}^2f_{24}} + 2q\frac{f_4f_{12}^2f_{16}^2}{f_2^4f_6f_{24}}.
\end{align*}
We then see that 
$$
\sum_{n\geq 0} a(4n+2)q^n = 2\frac{f_2f_{6}^2f_{8}^2}{f_1^4f_3f_{12}}
$$
and this proves Theorem \ref{JAS_id1}.  
Finally, 
\begin{align*}
\sum_{n\geq 0} a(2n+1)q^n 
&=
\frac{f_4^2f_{24}}{f_{8}f_{12}} \left( \frac{1}{f_1^2} \right)\\
&= 
\frac{f_4^2f_{24}}{f_{8}f_{12}}\left(  \frac{f_8^5}{f_2^5f_{16}^2} + 2q\frac{f_4^2f_{16}^2}{f_2^5f_8}  \right) \\
&= 
\frac{f_4^2f_8^4f_{24}}{f_2^5f_{12}f_{16}^2} + 2q\frac{f_4^4f_{16}^2f_{24}}{f_2^5f_8^2f_{12}}.
\end{align*}
Hence, 
$$
\sum_{n\geq 0} a(4n+3)q^n = 2\frac{f_2^4f_{8}^2f_{12}}{f_1^5f_4^2f_{6}}
$$
and this proves Theorem \ref{JAS_id2}.  
\end{proof}

\section{Proof via a Particular Theta Function}
\label{sec:thetafn} 
We begin this section by stating one additional lemma which we need as we complete our generating function manipulations below.  

\begin{lemma}
\label{div_bin_coeffs}
For all positive integers $a$ and $b$, and for all primes $p$, 
$$f_{ap}^b \equiv f_a^{bp} \pmod{p}.$$ 
\end{lemma}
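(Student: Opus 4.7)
The plan is to derive the lemma from the ``freshman's dream'' (equivalently, the Frobenius endomorphism on a ring of characteristic $p$), applied termwise to the infinite product defining $f_a$. Since all the constants in play are integers, congruences modulo $p$ in the formal power series ring $\mathbb{Z}[[q]]$ are completely well-defined, coefficient by coefficient.

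First I would record the elementary identity: for any prime $p$ and any indeterminate $x$, the binomial theorem gives
$$(1-x)^p = \sum_{k=0}^p \binom{p}{k}(-x)^k \equiv 1 + (-x)^p \pmod{p},$$
because $\binom{p}{k} \equiv 0 \pmod p$ for $1 \leq k \leq p-1$. Whether $p$ is odd (so $(-x)^p = -x^p$) or $p=2$ (so $(-x)^p = x^p \equiv -x^p \pmod 2$), one concludes
$$(1-x)^p \equiv 1 - x^p \pmod{p}.$$

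Next I would specialize to $x = q^{ai}$ for each $i \geq 1$ and multiply over $i$. Because the coefficient of each fixed power of $q$ in the product involves only finitely many factors, the congruence is preserved under the infinite product, and we obtain
$$f_a^p = \prod_{i\geq 1}(1-q^{ai})^p \equiv \prod_{i\geq 1}(1-q^{api}) = f_{ap} \pmod{p}.$$
Raising both sides to the $b$-th power (which preserves congruences modulo $p$) yields the claimed identity
$$f_a^{bp} \equiv f_{ap}^{b} \pmod{p}.$$

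There is essentially no obstacle here: the argument is the standard application of the Frobenius homomorphism in characteristic $p$, and the only mild subtlety is justifying that the coefficient-wise congruence survives the infinite product, which follows because each coefficient of $f_a^p$ is computed from finitely many binomial expansions.
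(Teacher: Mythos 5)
Your proof is correct and follows essentially the same route as the paper, which simply cites the Binomial Theorem and the divisibility of $\binom{p}{i}$ by $p$ for $1\leq i\leq p-1$; you have merely written out in full the termwise application of $(1-x)^p\equiv 1-x^p \pmod{p}$ to the product defining $f_a$ and then raised to the $b$-th power. No gaps.
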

\begin{proof}
This lemma follows immediately from the Binomial Theorem and the fact that, for any prime $p$ and all $1\leq i\leq p-1$,  the binomial coefficients $\binom{p}{i}$ are divisible by $p$.  
\end{proof}

We are now in a position to provide an alternative proof of Corollary \ref{Merca_congs}.  Consider
\begin{equation*}
G(q):= \frac{f_2^2f_3f_{12}}{f_1f_4f_6}
\end{equation*}
which appears in Mersmann's list of the 14 primitive eta--products which
are holomorphic modular forms of weight 1/2.  See \cite[page 30]{123MF},  \cite[Theorem 1.1]{LO13}, and \cite[\seqnum{A089801}]{OEIS} for additional details.  After some elementary calculations, we find that 
\begin{equation}
\label{Gseries}
G(q)= \sum_{k=-\infty}^\infty q^{3k^2+2k}=1+q+{q}^{5}+{q}^{8}+{q}^{16}+{q}^{21}+{q}^{33}+{q}^{40}+{q}^{56}+{q}^{65}+{q}^{85}+ \dots
\end{equation}
If we write 
$$G(q) = \sum_{n\geq 0}g(n)q^n,$$
then we see from (\ref{Gseries}) that 
\begin{equation}
\label{gformula}
g(n)=
\begin{cases}
	      1, & \text{if $n=3k^2+2k$ for some integer $k$}\\
           0, & \text{otherwise.}
\end{cases}
\end{equation}

From (\ref{genfnp}), we  see that 
$$
\frac{1}{f_{12}}G(q)  = \left(\sum_{n\geq 0}p(n)q^{12n}  \right)  \left( \sum_{n\geq 0}g(n)q^{n} \right).
$$
Moreover,  
\begin{align*}
\frac{1}{f_{12}}G(q) 
&=  
\frac{f_2^2f_3}{f_1f_4f_6} \\
&\equiv 
\frac{f_4f_3}{f_1f_4f_6} \pmod{2}   \textrm{\ \ \ (Lemma \ref{div_bin_coeffs}) }\\
&=
\frac{f_3}{f_1f_6}  \\
&= 
\sum_{n\geq 0} a(n)q^n. 
\end{align*}

Therefore, we know that  
$$
\sum_{n\geq 0} a(n)q^n \equiv \left(\sum_{n\geq 0}p(n)q^{12n}  \right)  \left( \sum_{n\geq 0}g(n)q^{n} \right) \pmod{2},
$$
so that, by convolution, we have 
\begin{equation}
\label{formula_for_a}
a(n) \equiv \sum_{k\geq 0} p(k)g(n - 12k) \pmod{2}
\end{equation}
where we define $g(m) = 0$ if $m<0$.  
In order to prove the first congruence in Corollary \ref{Merca_congs}, we want to focus on $a(4n+2)$.  Using (\ref{formula_for_a}), we see that 
$$a(4n+2) \equiv \sum_{k\geq 0} p(k)g(4n+2 - 12k) \pmod{2}$$
or 
$$a(4n+2) \equiv \sum_{k\geq 0} p(k)g(4(n-3k)+2) \pmod{2}.$$
From this congruence, we see that, if we can prove $g(4N+2) = 0$ for all $N$, then we will have proven that, for all $n\geq 0$, 
$$a(4n+2) \equiv 0 \pmod{2}.$$  So we now focus our attention on $g(4N+2)$.  

From (\ref{gformula}), we need to determine whether $4N+2 = 3k^2+2k$ for some integer $k$.  If such a solution exists, then it means that 
$$3(4N+2) + 1 = 3(3k^2+2k)+1 = (3k+1)^2$$ 
which is clearly square.  However, $3(4N+2)+1 = 12N+7 \equiv 3 \pmod{4}$, and we know that there are no squares that are congruent to 3 modulo 4.  Thus, there can be no solutions, which means $g(4N+2)=0$ for all $N$.  

Similarly, the focus of the second congruence in Corollary \ref{Merca_congs} is $a(4n+3)$, which means we need to show that $g(4N+3) = 0$ for all $N$.  Using a similar argument to the above, we can show that this is true because 
$4N+3 = 3k^2+2k$ for some integer $k$ if and only if 
$$3(4N+3) + 1 = 3(3k^2+2k)+1 = (3k+1)^2$$ 
which is clearly square.  However, $3(4N+3)+1 = 12N+10 \equiv 2 \pmod{4}$, and there are no squares congruent to 2 modulo 4.  So $g(4N+3)=0$ for all $N$, and this means that 
$$
a(4n+3) \equiv \sum_{k\geq 0} p(k)g(4(n-3k)+2) \pmod{2} \equiv 0 \pmod{2}.
$$



\begin{thebibliography}{00}

\bibitem{123MF} 
J. H. Bruinier, G. van der Geer, G. Harder, and D. Zagier, {\it The 1-2-3 of modular forms}, Universitext, Springer-Verlag, Berlin, 2008. 

\bibitem{CL}  
S. Corteel and J. Lovejoy, Overpartitions, {\it Trans. Amer. Math. Soc.} {\bf 356}, no. 4 (2004), 1623--1634.  

\bibitem{dSS} 
R. da Silva and J. A. Sellers, Congruences for the coefficients of the Gordon and McIntosh mock theta function $\xi(q)$,  {\it Ramanujan J.} {\bf 58}, no. 3 (2022), 815--834.

\bibitem{LO13}
R. J. Lemke Oliver, Eta--quotients and theta functions, {\it Adv. Math.} {\bf 241} (2013), 1--17.

\bibitem{MercaAM}
M. Merca,  Overpartitions in terms of 2--adic valuation, {\it Aequat. Math.}, to appear. 

\bibitem{OEIS} 
N. J. A. Sloane, ed., The Online Encyclopedia of Integer Sequences, https://oeis.org, 2024.

\bibitem{SR} 
C.-S. Radu, An algorithmic approach to Ramanujan-Kolberg identities, {\it J. Symbolic Comput.} {\bf 68} (2015), 225--253.

\bibitem{NS} 
N. A. Smoot, On the computation of identities relating partition numbers in arithmetic progressions with eta quotients: an implementation of Radu's algorithm, 
{\it J. Symbolic Comput.} {\bf 104} (2021), 276--311.


\end{thebibliography}
\end{document}